\newcommand{\vertiii}[1]{{\left\vert\kern-0.15ex\left\vert\kern-0.15ex\left\vert #1 
    \right\vert\kern-0.15ex\right\vert\kern-0.15ex\right\vert}}
\newtheorem{theorem}{Theorem}
\newtheorem{definition}{Definition}
\newtheorem{lemma}{Lemma}
\newdefinition{rmk}{Remark}
\newproof{pf}{Proof}
\begin{document}

\begin{frontmatter}

\title{Liouville-type theorems for sign-changing solutions to nonlocal elliptic inequalities and systems with variable-exponent nonlinearities}

\author[label]{Ahmad Z. Fino}
 \address[label]{Department of Mathematics, Faculty of Sciences, Lebanese University, P.O. Box 1352, Tripoli, Lebanon}
 \ead{ahmad.fino01@gmail.com; afino@ul.edu.lb}

\author[label1]{Mohamed Jleli}
\address[label1]{Department of Mathematics, College of Science, King Saud University, P.O. Box 2455, Riyadh, 11451, Saudia Arabia}
\ead{jleli@ksu.edu.sa}
 
\author[label1]{Bessem Samet}
\ead{bsamet@ksu.edu.sa}

\begin{abstract}
We consider the fractional elliptic inequality with variable-exponent nonlinearity 
$$
(-\Delta)^{\frac{\alpha}{2}} u+\lambda\, \Delta u \geq |u|^{p(x)}, \quad x\in\mathbb{R}^N,
$$
where $N\geq 1$, $\alpha\in (0,2)$, $\lambda\in\mathbb{R}$ is a  constant,   $p: \mathbb{R}^N\to (1,\infty)$ is a  measurable function, and $(-\Delta)^{\frac{\alpha}{2}}$ is the fractional Laplacian
operator of order $\frac{\alpha}{2}$. A Liouville-type theorem is established for the considered problem. Namely, we obtain sufficient conditions under which the only weak solution is the trivial one. Next, we extend our study to systems of fractional elliptic inequalities with variable-exponent nonlinearities. Besides the consideration of variable-exponent nonlinearities, the novelty of this work consists in investigating  sign-changing solutions to the considered problems. Namely, to the best of our knowledge, only nonexistence results of positive solutions to fractional elliptic problems were invetigated previously. Our approach is based on the nonlinear capacity method combined with  a pointwise estimate of the fractional Laplacian of some test functions, which was derived by Fujiwara (2018) (see also Dao and Reissig (2019)). Note that the standard nonlinear capacity method cannot be applied to the considered problems due to the change of sign of solutions. 
\end{abstract}

\begin{keyword}
Liouville-type results;  sign-changing weak solutions; fractional elliptic inequalities;  variable-exponent nonlinearities.

\MSC[2010] 35R11 \sep 35B53  \sep 35B33. 
\end{keyword}
\end{frontmatter}

\section{Introduction}
\setcounter{equation}{0} 

The goal of this paper is to study the nonexistence of nontrivial sign-changing solutions  to a class of fractional elliptic inequalities and systems with variable-exponent nonlinearities, namely 
\begin{equation}\label{1}
(-\Delta)^{\frac{\alpha}{2}} u+\lambda\, \Delta u \geq |u|^{p(x)}, \quad x\in\mathbb{R}^N
\end{equation}
and
\begin{eqnarray}\label{2}
\left\{\begin{array}{lll}
(-\Delta)^{\frac{\alpha}{2}} u+\lambda\, \Delta u &\geq & |v|^{q(x)},\quad x\in\mathbb{R}^N,\\ \\
(-\Delta)^{\frac{\beta}{2}} v+\mu\, \Delta v &\geq &|u|^{p(x)},\quad x\in\mathbb{R}^N,
\end{array}
\right.
\end{eqnarray}
where $N\geq 1$, $\alpha,\beta\in (0,2)$, $\lambda,\mu\in\mathbb{R}$ are constants,   $p,q: \mathbb{R}^N\to (1,\infty)$ are measurable functions, and $(-\Delta)^{\frac{\kappa}{2}}$, $\kappa\in \{\alpha,\beta\}$, is the fractional Laplacian
operator of order $\frac{\kappa}{2}$. We mention below some motivations for studying problems of types \eqref{1} and \eqref{2}.

In \cite{GS}, Gidas and  Spruck  considered the corresponding equation 
to \eqref{1} with $\alpha=2$, $\lambda=0$,  $p(\cdot)\equiv p$ and $u\geq 0$, namely
\begin{eqnarray}\label{1-GS}
\left\{\begin{array}{lllll}
-\Delta u &= &  u^p &\mbox{in}& \mathbb{R}^N,\\
u &\geq & 0 &\mbox{in}& \mathbb{R}^N.
\end{array}
\right.
\end{eqnarray}
It was shown that, 
\begin{itemize}
\item[(a)] if $N\geq 3$ and $1<p<\frac{N+2}{N-2}$, then \eqref{1-GS} admits admits no positive classical solution;
\item[(b)] if $N\geq 3$ and $p\geq \frac{N+2}{N-2}$, then \eqref{1-GS} admits positive classical solutions.
\end{itemize}

Consider the corresponding system of equations to  \eqref{2} with $\alpha=\beta=2$, $\lambda=\mu=0$, $p(\cdot)\equiv p>0$, $q(\cdot)\equiv q>0$ and $u,v\geq 0$,  namely the Lane-Emden system 
\begin{eqnarray}\label{2-LES}
\left\{\begin{array}{lllll}
-\Delta u &= & v^q &\mbox{in}& \mathbb{R}^N,\\ 
-\Delta v &= &u^p&\mbox{in}& \mathbb{R}^N,\\ 
u &\geq &  0 &\mbox{in}& \mathbb{R}^N,\\ 
v &\geq &  0 &\mbox{in}& \mathbb{R}^N,
\end{array}
\right.
\end{eqnarray}
where $N\geq 3$.  The famous Lane-Emden conjecture
states that, if 
$$
\frac{1}{p+1}+\frac{1}{q+1}>1-\frac{2}{N},
$$
then \eqref{2-LES} admits no positive classical solution. This conjecture was proved only in the cases $N\in \{3,4\}$ 
(see \cite{PQS,SZ,Souplet}).

In the case $\alpha=2$, $\lambda=0$, $p(\cdot)\equiv p>1$ and $u\geq 0$,   \eqref{1} reduces to 
\begin{eqnarray}\label{1-NS}
\left\{\begin{array}{lllll}
-\Delta u &\geq  &  u^p &\mbox{in}& \mathbb{R}^N,\\
u &\geq & 0 &\mbox{in}& \mathbb{R}^N.
\end{array}
\right.
\end{eqnarray}
Ni and Serrin \cite{NS} investigated the radial case of \eqref{1-NS}. Namely, it was shown that, if $N\geq 3$ and $1<p\leq \frac{N}{N-2}$, then \eqref{1-NS} has no positive radial solution such that  
$\displaystyle\lim_{|x|\to \infty} u(|x|)=0$.

Mitidieri and Pohozaev \cite{MP} studied sign-changing solutions  to the differential inequality
\begin{equation}\label{1-MP}
-\Delta u \geq |u|^{p} \quad \mbox{in }\,\,  \mathbb{R}^N.
\end{equation}
In the case $N\geq 3$, it was shown that, if 
\begin{equation}\label{cdMP}
1<p\leq \frac{N}{N-2},
\end{equation}
then  \eqref{1-MP}  has no nontrivial weak solution. Note that \eqref{cdMP} is sharp, in the sense that, if $N\geq 3$ and $p>\frac{N}{N-2}$, then  \eqref{1-MP} admits positive classical solutions. Indeed, one can check easily that in this case, 
$$
u(x)=\epsilon \left(1+|x|^2\right)^{\frac{1}{1-p}},\quad x\in \mathbb{R}^N,
$$
is a positive solution  to \eqref{1-MP} for sufficiently small $\epsilon>0$.  

Mitidieri and Pohozaev \cite{MP} studied also the corresponding system to \eqref{1-MP}, namely
\begin{eqnarray}\label{2-MP}
\left\{\begin{array}{lll}
-\Delta u &\geq & |v|^{q},\quad x\in\mathbb{R}^N,\\ \\
-\Delta v &\geq &|u|^{p},\quad x\in\mathbb{R}^N.
\end{array}
\right.
\end{eqnarray}
It was shown that, if
\begin{equation}\label{sysMP}
N\leq \max\left\{\frac{2q(p+1)}{pq-1},\frac{2p(q+1)}{pq-1}\right\},
\end{equation}
then \eqref{2-MP} admits no nontrivial weak solution. Moreover, condition \eqref{sysMP} is sharp, in the sense that, if $N> \max\left\{\frac{2q(p+1)}{pq-1},\frac{2p(q+1)}{pq-1}\right\}$, then \eqref{2-MP} admits positive classical solutions ($u,v>0$). Indeed, it can be easily seen that in this case,
$$
(u(x),v(x))=\left(\epsilon \left(1+|x|^2\right)^{\frac{q+1}{1-pq}},\epsilon \left(1+|x|^2\right)^{\frac{p+1}{1-pq}}\right),\quad x\in \mathbb{R}^N,
$$
is a positive solution to \eqref{2-MP} for sufficiently small $\epsilon>0$.  

A large class of differential inequalities and systems generalizing \eqref{1-MP} and \eqref{2-MP} was systematically investigated  by Mitidieri and Pohozaev (see e.g. \cite{MP98,MP1,MP2,MP}), who developed the nonlinear capacity method. Next, this approach was used by many authors in the study of different types of problems (see e.g. \cite{BP,CAM,DYZ,Fi0,Fi,Sun}
).

Nonlocal operators  have been receiving increased attention in recent years
due to their usefulness in modeling complex systems with long-range interactions or memory effects, which cannot be described properly via standard differential operators. In particular, the fractional Laplacian $(-\Delta)^\frac{\alpha}{2}$, $0<\alpha<2$, has been used to describe anomalous diffusion \cite{MM}, turbulent flows \cite{GU}, stochastic dynamics \cite{BBC,Chen}, finance \cite{CO}, and many other phenomena. Due to the above facts, the study of mathematical problems involving the fractional Laplacian operator has attracted significant attention recently. In particular, many interesting results related to Liouville-type theorems for  nonlocal elliptic problems have been obtained.

To  overcome the difficulty caused by the nonlocal property of the fractional Laplacian operator, Caffarelli and Silvestre \cite{CS} introduced an extension method which consists of  localizing the fractional Laplacian by constructing a Dirichlet to Neumann operator of a degenerate elliptic equation.  Using the mentioned approach, Brandle et al. \cite{BR} established a nonexistence result for the fractional version of 
\eqref{1-GS}, namely
\begin{eqnarray}\label{F-1-GS}
\left\{\begin{array}{lllll}
(-\Delta)^{\frac{\alpha}{2}} u &=  &  u^p &\mbox{in}& \mathbb{R}^N,\\
u &\geq & 0 &\mbox{in}& \mathbb{R}^N.
\end{array}
\right.
\end{eqnarray}
It was shown that, if $1\leq \alpha<2$, $N\geq 2$ and $1<p<\frac{N+\alpha}{N-\alpha}$, then \eqref{F-1-GS} has no nontrivial bounded solution. 

In \cite{ZH}, Zhuo et al. investigated \eqref{F-1-GS} using an equivalent integral representation to \eqref{F-1-GS}.  They obtained the same result as in \cite{BR} but under weaker conditions. Namely, they proved that, if $0<\alpha<2$, $N\geq 2$ and $1<p<\frac{N+\alpha}{N-\alpha}$, then \eqref{F-1-GS} has no nontrivial locally bounded solution. 

In \cite{QX}, Quaas and Xia studied the fractional Lane-Emden system 
\begin{eqnarray}\label{F-2-LES}
\left\{\begin{array}{lllll}
(-\Delta)^{\frac{\alpha}{2}} u &= & v^q &\mbox{in}& \mathbb{R}^N,\\ 
(-\Delta)^{\frac{\alpha}{2}}v &= &u^p&\mbox{in}& \mathbb{R}^N,\\ 
u &\geq &  0 &\mbox{in}& \mathbb{R}^N,\\ 
v &\geq &  0 &\mbox{in}& \mathbb{R}^N,
\end{array}
\right.
\end{eqnarray}
where $0<\alpha<2$, $N>\alpha$ and $p,q>0$. Using the method of moving planes, it was shown that, if $pq>1$, 
$$
\beta_1,\beta_2\in \left[\frac{N-\alpha}{2},N-\alpha\right)\quad\mbox{and}\quad (\beta_1,\beta_2)\neq \left(\frac{N-\alpha}{2},\frac{N-\alpha}{2}\right),
$$
where $\beta_1=\frac{\alpha(q+1)}{pq-1}$ and $\beta_2=\frac{\alpha(p+1)}{pq-1}$, then for some $\sigma > 0$, there exists no positive solution to 
\eqref{F-2-LES} in $X_{\alpha,\sigma}(\mathbb{R}^N)$, where
$$
X_{\alpha,\sigma}(\mathbb{R}^N)=
\left\{\begin{array}{lll}
C^{\alpha+\sigma}(\mathbb{R}^N) &\mbox{if}& 0<\alpha<1,\\
C^{1,\alpha+\sigma-1}(\mathbb{R}^N) &\mbox{if}& 1\leq \alpha<2.
\end{array}
\right.
$$

In the case $\lambda=0$, $p(\cdot)\equiv p$ and $u\geq 0$, \eqref{1} reduces to 
\begin{eqnarray}\label{1-FFNS}
\left\{\begin{array}{lllll}
(-\Delta)^{\frac{\alpha}{2}} u &\geq  &  u^p &\mbox{in}& \mathbb{R}^N,\\
u &\geq & 0 &\mbox{in}& \mathbb{R}^N.
\end{array}
\right.
\end{eqnarray}
Using the extension method \cite{CS}, Wang and Xiao \cite{WX} proved the following results for \eqref{1-FFNS}: Let $p>1$, $0<\alpha<2$ and $N\geq 1$. Then
\begin{itemize}
\item[(a)] if $N\leq \alpha$, then \eqref{1-FFNS} has no nontrivial weak solution;
\item[(b)] if $N>\alpha$, then \eqref{1-FFNS} has no nontrivial weak solution when and only when $p\leq \frac{N}{N-\alpha}$.
\end{itemize}

In the special case $\lambda=\mu=0$, $p(\cdot)\equiv p$, $q(\cdot)\equiv q$ and $u,v\geq 0$, \eqref{2} reduces to 
\begin{eqnarray}\label{2-testf}
\left\{\begin{array}{lllll}
(-\Delta)^{\frac{\alpha}{2}} u &\geq & v^{q} &\mbox{in}& \mathbb{R}^N,\\ 
(-\Delta)^{\frac{\beta}{2}} v  &\geq & u^{p}&\mbox{in}& \mathbb{R}^N,\\
u &\geq &  0 &\mbox{in}& \mathbb{R}^N,\\ 
v &\geq &  0 &\mbox{in}& \mathbb{R}^N.
\end{array}
\right.
\end{eqnarray}
Using the nonlinear capacity method and Ju's inequality \cite{Ju}, Dahmani et al. \cite{DKK} proved that, if $0<\alpha,\beta<2$, $p,q>1$ and 
\begin{equation}\label{Nest}
N<\max\left\{\beta+\frac{\alpha}{q},\alpha+\frac{\beta}{p}\right\} \frac{pq}{pq-1},
\end{equation}
then \eqref{2-testf} has no nontrivial weak solution. Observe that in the case $p=q>1$, $0<\alpha=\beta<2$, $N>\alpha$  and $u=v\geq 0$, \eqref{Nest} reduces to $p<\frac{N}{N-\alpha}$, which is the sufficient condition for the nonexistence of nontrivial weak solution to \eqref{1-FFNS} obtained in the statement (b) (without the limit case $p=\frac{N}{N-\alpha}$).

In all the above mentioned results, the positivity of solutions is essential. In particular, the standard nonlinear capacity method used in \cite{DKK} cannot be applied to \eqref{1} and \eqref{2},  where solutions can change sign. Namely, the main difficulty consists in constructing a function $\theta\in C_0^\infty(\mathbb{R}^N)$, $\theta\geq 0$,  so that 
$$
\left|(-\Delta)^{\frac{\kappa}{2}}\theta^\ell(x)\right|\leq C \theta^{\ell-1}(x) \left|(-\Delta)^{\frac{\kappa}{2}}\theta(x)\right|,\quad x\in \mathbb{R}^N,
$$
where $\kappa\in (0,2)$,  $\ell>1$ and $C>0$ is a constant (independent on $x$).

The originality of this work  resides in considering sign-changing solutions 
 to fractional elliptic inequalities and systems, as well as variable exponents. As in \cite{DKK}, we use the nonlinear capacity method, but with a different choice of the test function, allowing us to  treat the sign-changing solutions case. This choice is motivated by the recent work of 
Dao and Reissig \cite{DR}, where a blow-up result  for semi-linear structurally damped $\sigma$-evolution equations  was derived. 

Before stating our main results, we recall some basic notions related to the fractional Laplacian operator and Lebesgue spaces with  variable exponents, and define weak solutions to \eqref{1} and \eqref{2}. For more details about these notions, we refer to \cite{DHHMS,DHHMS2,Kwanicki,Silvestre} and the references therein.

Let $s\in (0,1)$. The fractional Laplacian operator $(-\Delta)^s$ is defined as 
\begin{equation}\label{fracL}
(-\Delta)^s f(x)=C_{N,s}\, P.V. \, \int_{\mathbb{R}^N}\frac{f(x)- f(y)}{|x-y|^{N+2s}}\, dy,\quad x\in \mathbb{R}^N,
\end{equation}
where $f$ belongs to a suitable set of functions,  $P.V.$ stands for Cauchy's principal value,  and $C_{N,s}>0$ is a  normalization constant that depends only on $N$ and $s$.

Let $p: \mathbb{R}^N\to (1,\infty)$ be a measurable function such that 
\begin{equation}\label{asm1}
1<p^-:=\mbox{ess} \inf_{x\in \mathbb{R}^N}p(x)\leq p(x)\leq p^+:=\mbox{ess} \sup_{x\in \mathbb{R}^N}p(x)<\infty,\quad x\in \mathbb{R}^N\, a.e. 
\end{equation}
The variable exponent Lebesgue space  $L^{p(\cdot)}(\mathbb{R}^N)$ is defined by 
$$
L^{p(\cdot)}(\mathbb{R}^N)=\left\{f:\mathbb{R}^N\to \mathbb{R}: f\mbox{ is measurable},\, \varrho_{p(\cdotp)}(\lambda f)<\infty \mbox{ for some }\lambda>0 \right\},
$$
where the modular $\varrho_{p(\cdotp)}$ is defined by
$$
\varrho_{p(\cdotp)}(f)=\int_{\mathbb{R}^N}| f(x)|^{p(x)}\,dx,
$$
with a Luxemburg-type norm
$$
\|f\|_{p(\cdotp)}=\inf\left\{\lambda>0:\,\varrho_{p(\cdotp)}\left(\frac{f}{\lambda}\right)\leq 1\right\},\quad f\in L^{p(\cdot)}(\mathbb{R}^N).
$$
Equipped with this norm, $L^{p(\cdotp)}(\mathbb{R}^N)$ is a Banach space. Moreover, one has
\begin{equation}\label{esve}
\min\left\{\varrho_{p(\cdotp)}(f)^{\frac{1}{p^-}},\varrho_{p(\cdotp)}(f)^{\frac{1}{p^+}}\right\}\leq \|f\|_{p(\cdotp)}\leq \max\left\{\varrho_{p(\cdotp)}(f)^{\frac{1}{p^-}},\varrho_{p(\cdotp)}(f)^{\frac{1}{p^+}}\right\},\quad f\in L^{p(\cdot)}(\mathbb{R}^N).
\end{equation}

Problem \eqref{1} is investigated under the following assumptions: $N\geq 1$, $0<\alpha<2$, $\lambda\in \mathbb{R}$, and $p: \mathbb{R}^N\to (1,\infty)$ is  a measurable function satisfying \eqref{asm1}. 

\begin{definition}[Weak solution for \eqref{1}]\label{weakequation}  
We say that $u  \in L^{2}(\mathbb{R}^N)\cap L^{2p(\cdot)}(\mathbb{R}^N)$ is a weak solution to \eqref{1} if
$$
\int_{\mathbb{R}^N}|u(x)|^{p(x)}\varphi(x)\,dx\leq \int_{\mathbb{R}^N}u(x)(-\Delta)^{\frac{\alpha}{2}}\varphi(x)\,dx+\lambda \int_{\mathbb{R}^N}u(x)\Delta\varphi(x)\,dx,
$$
for all $\varphi\in H^2(\mathbb{R}^N)$,  $\varphi\geq 0$. 
\end{definition}

Problem \eqref{2} is investigated under the following assumptions: $N\geq 1$, $0<\alpha,\beta<2$, $\lambda,\mu\in \mathbb{R}$, and $p,q: \mathbb{R}^N\to (1,\infty)$   are measurable functions satisfying respectively \eqref{asm1} and
$$
1<q^-:=\mbox{ess} \inf_{x\in \mathbb{R}^N}q(x)\leq q(x)\leq q^+:=\mbox{ess} \sup_{x\in \mathbb{R}^N}q(x)<\infty,\quad x\in \mathbb{R}^N\, a.e. 
$$

\begin{definition}[Weak solution for \eqref{2}]
We say that 
$$
(u,v) \in (L^{2}(\mathbb{R}^N)\cap L^{2p(\cdotp)}(\mathbb{R}^N))\times (L^{2}(\mathbb{R}^N)\cap L^{2q(\cdotp)}(\mathbb{R}^N))
$$
is a weak solution to  \eqref{2} if
\begin{equation}\label{weaksystem1}  
\int_{\mathbb{R}^N}|v(x)|^{q(x)}\varphi(x)\,dx\leq \int_{\mathbb{R}^N}u(x)(-\Delta)^{\frac{\alpha}{2}}\varphi(x)\,dx+\lambda \int_{\mathbb{R}^N}u(x)\Delta\varphi(x)\,dx
\end{equation}
and
\begin{equation}\label{weaksystem2}  
\int_{\mathbb{R}^N}|u(x)|^{p(x)}\varphi(x)\,dx\leq \int_{\mathbb{R}^N}v(x)(-\Delta)^{\frac{\beta}{2}}\varphi(x)\,dx+\mu \int_{\mathbb{R}^N}v(x)\Delta\varphi(x)\,dx,
\end{equation}
for all  $\varphi\in H^2(\mathbb{R}^N)$, $\varphi\geq 0$. 
\end{definition}

Now, we are ready to state the main results of this paper.

\begin{theorem}\label{theorem1}
If 
\begin{equation}\label{cdnonexist1}
1<p^-\leq p^+< p^*(N),
\end{equation}
where
$$
p^{*}(N)=\left\{\begin{array}{lll}
\infty &\mbox{if}& N\leq\alpha,\\
\frac{N}{N-\alpha} &\mbox{if}& N>\alpha,
\end{array}
\right.
$$
then the only weak solution to \eqref{1} is the trivial one.
\end{theorem}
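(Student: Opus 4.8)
The plan is to use the nonlinear capacity method with a carefully chosen test function, as announced in the introduction. Suppose $u$ is a weak solution to \eqref{1}. The test functions I would use are of the form $\varphi_R(x) = \Phi\big(|x|^2/R^2\big)^\ell$ (or more precisely, following Dao--Reissig, a function built from $\langle x\rangle = (1+|x|^2)^{1/2}$ suitably truncated at scale $R$), where $\Phi$ is a smooth nonincreasing cutoff equal to $1$ on $[0,1]$ and vanishing on $[2,\infty)$, and $\ell > 1$ is a large exponent to be fixed. The crucial analytic input — which I would take as available from Fujiwara (2018) / Dao--Reissig \cite{DR} — is the pointwise bound
\begin{equation}\label{planpw}
\left|(-\Delta)^{\frac{\alpha}{2}}\varphi_R(x)\right| \leq \frac{C}{R^\alpha}\, \varphi_R(x)^{1-\frac{1}{\ell}} \cdot \mathbf{1}_{B_{\sqrt 2 R}}(x), \qquad \left|\Delta \varphi_R(x)\right| \leq \frac{C}{R^2}\, \varphi_R(x)^{1-\frac{1}{\ell}}\cdot \mathbf{1}_{B_{\sqrt 2 R}}(x),
\end{equation}
with $C$ independent of $R$ and $x$; note the $1-1/\ell$ power on the right, which is exactly what lets the method survive the sign change (the ordinary scaling $\theta^{\ell-1}|(-\Delta)^{\alpha/2}\theta|$ fails for the nonlocal operator, as remarked in the introduction).

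With \eqref{planpw} in hand, plug $\varphi_R$ into the weak formulation of Definition \ref{weakequation}. The left side is $\int |u|^{p(x)}\varphi_R\,dx$. On the right side, bound $\int u\,(-\Delta)^{\alpha/2}\varphi_R\,dx + \lambda\int u\,\Delta\varphi_R\,dx$ in absolute value using \eqref{planpw} by $C(R^{-\alpha}+|\lambda|R^{-2})\int_{B_{\sqrt2 R}} |u|\,\varphi_R^{1-1/\ell}\,dx$. Since $\alpha<2$, for $R\geq 1$ this is $\leq C R^{-\alpha}\int_{B_{\sqrt2 R}}|u|\,\varphi_R^{1-1/\ell}\,dx$. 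Now apply a Young-type inequality with the variable exponent $p(x)$: for $\varepsilon>0$,
\[
|u|\,\varphi_R^{1-\frac 1\ell} = \Big(|u|\varphi_R^{\frac{1}{p(x)}}\Big)\cdot \varphi_R^{1-\frac1\ell-\frac{1}{p(x)}} \leq \varepsilon\, |u|^{p(x)}\varphi_R + C(\varepsilon)\, \varphi_R^{\,p'(x)\left(1-\frac1\ell-\frac1{p(x)}\right)},
\]
where $p'(x) = p(x)/(p(x)-1)$; choosing $\ell$ large enough (depending on $p^-,p^+$) guarantees the exponent $p'(x)(1-1/\ell-1/p(x)) = (1-\ell^{-1})p'(x) - 1 \geq 0$ is $\geq$ some positive number, so $\varphi_R$ raised to it is still a legitimate nonnegative power bounded by $\mathbf 1_{B_{\sqrt 2R}}$. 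Absorbing the $\varepsilon\int|u|^{p(x)}\varphi_R$ term into the left side yields
\[
\int_{\mathbb{R}^N}|u|^{p(x)}\varphi_R\,dx \leq C\, R^{-\alpha}\int_{B_{\sqrt 2R}} \varphi_R^{\,(1-\ell^{-1})p'(x)-1}\,dx.
\]

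The last step is the scaling/measure estimate on the right-hand integral. On the annulus where $\varphi_R$ is supported, $|x|\leq \sqrt2 R$, so $|B_{\sqrt 2R}| \leq C R^N$, and since the integrand is bounded by $1$ there, the right side is $\leq C R^{N-\alpha}$ when $N>\alpha$ — and here I must be more careful to exploit the variable exponent: split the domain into where $p(x)$ is near $p^-$ versus $p^+$ and use that $(1-\ell^{-1})p'(x)-1$ stays bounded, giving a uniform $R^{N-\alpha}$ bound; if $N\leq \alpha$ then $R^{-\alpha}|B_{\sqrt2R}| = CR^{N-\alpha}\to 0$ (or is bounded) and a slightly different endpoint argument is needed. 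In the case $N > \alpha$ with $p^+ < N/(N-\alpha)$, one refines the Young step so that the power of $R$ produced is $N - \alpha p'^{\pm}/(\cdot)$-type and is negative, i.e. $R^{N-\alpha \cdot \frac{p}{p-1}\cdot(\text{something})}$; more cleanly, one shows the right side tends to $0$ as $R\to\infty$ precisely when $p^+ < p^*(N)$, using that $N - \alpha \frac{p^+}{p^+-1}<0 \iff p^+ < N/(N-\alpha)$. Letting $R\to\infty$ and invoking monotone convergence forces $\int_{\mathbb{R}^N}|u|^{p(x)}\,dx = 0$, hence $u\equiv 0$. The main obstacle I anticipate is the bookkeeping in the variable-exponent Young inequality — making sure a single choice of $\ell$ works uniformly over $x$ and that the resulting powers of $\varphi_R$ are both nonnegative and integrable against the $R^{-\alpha}$ prefactor — together with correctly handling the borderline arithmetic that converts $p^+<p^*(N)$ into a strictly negative power of $R$; the limiting case $N=\alpha$ (where $p^*(N)=\infty$) and the endpoint $N>\alpha$, $p=N/(N-\alpha)$ also need the standard trick of replacing the crude bound $\int_{B_{\sqrt2R}}|u|^{p(x)}\varphi_R$ by an integral over the annulus $R\leq |x|\leq\sqrt2 R$ and sending that to zero via absolute continuity of the integral.
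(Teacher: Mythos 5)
Your argument hinges on a pointwise estimate that is false for the nonlocal operator: for a compactly supported (or truncated) cutoff $\varphi_R$, the function $(-\Delta)^{\frac{\alpha}{2}}\varphi_R$ does \emph{not} vanish outside $B_{\sqrt{2}R}$. By \eqref{fracL}, for $|x|$ large one has $(-\Delta)^{\frac{\alpha}{2}}\varphi_R(x)=-C_{N,\alpha/2}\int \varphi_R(y)|x-y|^{-N-\alpha}\,dy\sim -cR^{N}|x|^{-N-\alpha}\neq 0$, so it can be neither cut off by an indicator of $B_{\sqrt 2 R}$ nor dominated by $CR^{-\alpha}\varphi_R^{1-1/\ell}$, whose right-hand side is zero there. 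This is not a technicality: on the exterior region you are left with $\int_{|x|>\sqrt2 R}|u|\,|(-\Delta)^{\alpha/2}\varphi_R|\,dx$, which cannot be absorbed into $\int|u|^{p(x)}\varphi_R\,dx$ (since $\varphi_R\equiv 0$ there) and cannot be discarded because $u$ changes sign. This is precisely the obstruction the paper singles out in the introduction as the reason the standard capacity cutoff fails here, and the Fujiwara/Dao--Reissig input is not the bound you state: it is estimate \eqref{3} of Lemma \ref{lemma1}, valid for the \emph{globally positive, non-truncated} weight $\theta(x)=\langle x\rangle^{-\rho}$ of \eqref{testfOK} with $N<\rho\le N+2s$, namely $|(-\Delta)^s\theta|\le C\theta$ with the \emph{same} power of $\theta$ on both sides. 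The paper's proof takes $\varphi(x)=\theta(x/R)$ with $\rho=N+\alpha$, so that $\varphi>0$ everywhere (Young's inequality can then be applied at every $x$), $\theta\in L^1$, and scaling gives the capacity bound $CR^{N-\frac{\alpha p^+}{p^+-1}}$.

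A secondary, fixable issue: even granting your pointwise bound, your bookkeeping extracts only the prefactor $R^{-\alpha}$ before applying Young's inequality, yielding a bound $CR^{N-\alpha}$ which does not tend to $0$ when $N>\alpha$; you acknowledge this and gesture at a refinement, but the correct execution (as in the paper) keeps the factor $R^{-\alpha}$ inside the Young step so that it gets raised to the conjugate exponent $\frac{p(x)}{p(x)-1}$, producing $R^{N-\frac{\alpha p^+}{p^+-1}}$, whose exponent is negative exactly when $p^+<\frac{N}{N-\alpha}$ (and automatically when $N\le\alpha$), which is condition \eqref{cdnonexist1}. As it stands, though, the central pointwise estimate you rely on is unavailable, so the proof does not go through; replacing the truncated cutoff by the decaying weight $\langle x\rangle^{-N-\alpha}$ and invoking Lemma \ref{lemma1} is the missing idea.
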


\begin{rmk}
When $N>\alpha$, it is still an open question whether or not \eqref{1} admits nontrivial sign-changing weak solutions in the following cases:
\begin{itemize}
\item[(a)] $1<p^-\leq p^*(N)\leq p^+$;
\item[(b)] $1<p^*(N)<p^-$.
\end{itemize}
Note that in the case $\lambda=0$ and $p(\cdot)\equiv p$ (i.e. $p^-=p^+=p$), (a) reduces to 
\begin{itemize}
\item[(a')] $p=p^*(N)$,
\end{itemize}
while (b) reduces to 
\begin{itemize}
\item[(b')] $p>p^*(N)$.
\end{itemize} 
From \cite[Theorem 1.1 (ii)]{WX}, in the case (a'), the only nonnegative weak solution to this special case of \eqref{1} is the trivial one; while in the case (b'), positive strong solutions exist. 
\end{rmk}

\begin{rmk}
(i) The exponent $p^*(N)$  depends only on the dimension and the lower order power of $(-\Delta)$, i.e. on $\alpha$.\\
(ii) In the case $\alpha=2$, $p^*(N)$ is the critical exponent for problem \eqref{1-MP}.
\end{rmk}

\begin{theorem}\label{theorem2}
If 
\begin{equation}\label{condsyst}
N< \frac{p^+q^+}{p^+q^+-1} \max\left\{\beta+\frac{\alpha}{q^+},\alpha+\frac{\beta}{p^+}\right\},
\end{equation}
then the only weak solution to \eqref{2} is the trivial one, i.e. $(u,v)\equiv (0,0)$.
\end{theorem}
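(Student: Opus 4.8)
The plan is to run the nonlinear capacity method on the two weak inequalities \eqref{weaksystem1}--\eqref{weaksystem2} simultaneously, but---as announced in the introduction---using the Fujiwara--Dao--Reissig test family rather than a power of a compactly supported cut-off, which is what lets the change of sign of $u$ and $v$ be handled. First I would fix a family $(\varphi_R)_{R\geq1}$, $\varphi_R(x)=\varphi_1(x/R)$, with $0\leq\varphi_1\in L^1(\mathbb{R}^N)\cap H^2(\mathbb{R}^N)$, $\varphi_1$ continuous, $\varphi_1(0)>0$, enjoying the pointwise estimates
$$
\big|(-\Delta)^{\frac{\kappa}{2}}\varphi_R(x)\big|\leq C\,R^{-\kappa}\,\varphi_R(x),\qquad |\Delta\varphi_R(x)|\leq C\,R^{-2}\,\varphi_R(x)\qquad(x\in\mathbb{R}^N,\ \kappa\in\{\alpha,\beta\});
$$
the existence of such a family is exactly the input borrowed from the works quoted in the introduction. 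Then each $\varphi_R$ lies in $H^2(\mathbb{R}^N)$, hence is admissible in the weak formulation, $\int_{\mathbb{R}^N}\varphi_R\,dx=cR^N$, and $u\in L^2\cap L^{2p(\cdot)}$, $v\in L^2\cap L^{2q(\cdot)}$ make all the integrals below finite.

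Inserting $\varphi=\varphi_R$ into \eqref{weaksystem1} and \eqref{weaksystem2} and using $0<\alpha,\beta<2$ (so that $R^{-2}\leq R^{-\min\{\alpha,\beta\}}$ for $R\geq1$), the two pointwise bounds give
$$
B_R:=\int_{\mathbb{R}^N}|v|^{q(x)}\varphi_R\,dx\leq C\,R^{-\alpha}\int_{\mathbb{R}^N}|u|\varphi_R\,dx,\qquad A_R:=\int_{\mathbb{R}^N}|u|^{p(x)}\varphi_R\,dx\leq C\,R^{-\beta}\int_{\mathbb{R}^N}|v|\varphi_R\,dx.
$$
Next I would write $|u|\varphi_R=\big(|u|\,\varphi_R^{1/p(x)}\big)\,\varphi_R^{1/p'(x)}$, with $p'(x)=\tfrac{p(x)}{p(x)-1}$, apply H\"older's inequality in the variable-exponent Lebesgue spaces, and convert Luxemburg norms into modulars by means of \eqref{esve}; since $p'(\cdot)$ takes values in $\big[\tfrac{p^+}{p^+-1},\tfrac{p^-}{p^--1}\big]$ and $\int_{\mathbb{R}^N}\varphi_R\,dx=cR^N$, one obtains, for $R$ large,
$$
B_R\leq C\,R^{a}\max\big\{A_R^{1/p^-},A_R^{1/p^+}\big\},\qquad A_R\leq C\,R^{b}\max\big\{B_R^{1/q^-},B_R^{1/q^+}\big\},
$$
with $a:=-\alpha+N\tfrac{p^+-1}{p^+}$ and $b:=-\beta+N\tfrac{q^+-1}{q^+}$. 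This pair of inequalities plays, here, the role that Ju's inequality played in \cite{DKK}.

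A direct computation identifies \eqref{condsyst} with the condition $\min\{a+\tfrac{b}{p^+},\; b+\tfrac{a}{q^+}\}<0$; by the symmetry between the two equations it suffices to treat the case $a+\tfrac{b}{p^+}<0$. Substituting the second inequality into the first and absorbing, I expect to reach an estimate of the form $B_R^{\,1-1/(p^+q^+)}\leq C\,R^{\,a+b/p^+}$ for all large $R$, so that $B_R\to0$ as $R\to\infty$. The delicate point is the bookkeeping forced by the maxima: one has to show that the ``bad'' branches, in which $p^-$ or $q^-$ would appear instead of $p^+$ or $q^+$, cannot persist; the natural way---mirroring the proof of Theorem~\ref{theorem1}---is to deduce from $a+b/p^+<0$ that $A_R<1$ and $B_R<1$ for $R$ large, so that only the $1/p^+$, $1/q^+$ branches survive and the iteration closes. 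Once $B_R\to0$, the pointwise convergence $\varphi_R(x)\to\varphi_1(0)>0$ and Fatou's lemma force $\int_{\mathbb{R}^N}|v|^{q(x)}\,dx=0$, i.e. $v\equiv0$; plugging $v\equiv0$ into \eqref{weaksystem2} gives $\int_{\mathbb{R}^N}|u|^{p(x)}\varphi_R\,dx=0$ for every $R$, hence $u\equiv0$. The hard part, I anticipate, is exactly this coupled, variable-exponent iteration: the sign change of $u,v$ removes positivity, so the right-hand sides of the weak inequalities are controlled only through the pointwise estimate on $(-\Delta)^{\kappa/2}\varphi_R$, and the interplay of $p^+$ and $q^+$---together with the case distinctions $A_R\gtrless1$, $B_R\gtrless1$ imposed by the variable-exponent modular inequalities---must be tracked with enough precision to land exactly on \eqref{condsyst}.
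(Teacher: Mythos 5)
Your overall framework is the paper's: the nonlinear capacity method with the rescaled test function $\varphi_R(x)=\theta(x/R)$, $\theta=\langle\cdot\rangle^{-\rho}$, the pointwise bound $|(-\Delta)^{\kappa/2}\varphi_R|\leq CR^{-\kappa}\varphi_R$ from Lemma~\ref{lemma1}, the coupled inequalities linking $A_R=\int|u|^{p(x)}\varphi_R$ and $B_R=\int|v|^{q(x)}\varphi_R$, the identification of \eqref{condsyst} with $\min\{\overline{\sigma_1},\overline{\nu_1}\}<0$, and the final Fatou argument are all correct and match the paper. The genuine gap is exactly the point you flag and defer: the ``bad branches'' carrying $p^-$ or $q^-$. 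Your variable-exponent H\"older step pairs \emph{both} branches $A_R^{1/p^+}$ and $A_R^{1/p^-}$ with the \emph{same} factor $R^{N(p^+-1)/p^+}$ (coming from $\|\varphi_R^{1/p'(\cdot)}\|_{p'(\cdot)}$ evaluated at $(p')^-=p^+/(p^+-1)$). With that loss, the bad-branch iteration yields $B_R^{1-1/(p^-q^\pm)}\leq CR^{\,a+b/p^-}$, and $a+b/p^-$ need \emph{not} be negative under \eqref{condsyst}: if $b=N\frac{q^+-1}{q^+}-\beta>0$ and $p^+/p^-$ is large, one can have $a+b/p^+<0<a+b/p^-$. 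Your proposed remedy --- deduce from \eqref{condsyst} that $A_R<1$ and $B_R<1$ for large $R$ --- is not available: $A_R$ is nondecreasing in $R$ and increases to $\varrho_{p(\cdot)}(u)$, so for a hypothetical nontrivial solution with modular $\geq 1$ one has $A_R\geq 1$ for all large $R$; ruling this out is essentially the conclusion of the theorem, so the step is circular as stated.

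The paper closes this gap by a different, sharper bookkeeping: instead of variable-exponent H\"older, it splits each integral over the sets $\{|u|<1\}$ and $\{|u|\geq1\}$ (level sets of the solution, not a dichotomy on the size of $A_R$) and applies constant-exponent H\"older with $p^+$ on the first piece and $p^-$ on the second. This way each power $X^{1/p^\pm}$ comes with its \emph{own} prefactor $[F(\alpha,p^\pm,\varphi_R)]^{(p^\pm-1)/p^\pm}\leq CR^{N\frac{p^\pm-1}{p^\pm}-\alpha}$, i.e.\ the minus branch is accompanied by the smaller factor $R^{N(p^--1)/p^-}$ rather than $R^{N(p^+-1)/p^+}$. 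Keeping all four cross terms, substituting one inequality into the other, and absorbing $X$ (resp.\ $Y$) by $\varepsilon$-Young, one gets $X\leq C\sum_i R^{\sigma_i}$ and $Y\leq C\sum_i R^{\nu_i}$, where a direct computation shows $\overline{\sigma_1}=\max_i\overline{\sigma_i}$ and $\overline{\nu_1}=\max_i\overline{\nu_i}$; hence \eqref{condsyst} makes \emph{all} exponents on one side negative and no branch needs to be discarded. If you replace your variable-exponent H\"older step by this level-set splitting (and then run exactly the iteration you outline), your argument closes and lands on \eqref{condsyst}; as written, it does not.
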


\begin{rmk}
(i) In the case $\alpha=\beta=2$, $p(\cdot)\equiv p$ and $q(\cdot)\equiv q$, \eqref{condsyst} reduces to \eqref{sysMP} (with strict inequality), which is the obtained condition in \cite{MP}, under which \eqref{2-MP} admits no nontrivial weak solution.\\ (ii) In the case  $p(\cdot)\equiv p$ and $q(\cdot)\equiv q$, \eqref{condsyst} reduces to \eqref{Nest}, which is the obtained condition in \cite{DKK}, under which \eqref{2-testf} has no positive weak solution.  
\end{rmk}

The proofs of our main results are given in the next section.

\section{Proofs}\label{sec2}

In this section, we give the proofs of Theorems \ref{theorem1} and \ref{theorem2}. We shall use the nonlinear capacity method combined with the following pointwise estimate (see Fujiwara \cite{Fuj} and Dao and Reissig \cite{DR}).

\begin{lemma}\label{lemma1}
Let 
$$
\langle x\rangle:=(1+|x|^2)^{\frac{1}{2}},\quad x\in \mathbb{R}^N.
$$
Let $s \in (0,1]$ and $\theta: \mathbb{R}^N\to (0,\infty)$ be the function defined by  
\begin{equation}\label{testfOK}
\theta(x)=\langle x\rangle^{-\rho},\quad x\in \mathbb{R}^N,
\end{equation}
where $N<\rho\leq N+2s$. Then $\theta\in L^1(\mathbb{R}^N)\cap H^2(\mathbb{R}^N)$, and the following estimate holds:
\begin{equation}\label{3}
\left|(-\Delta)^s\theta(x)\right|\leq C  \theta(x), \quad x\in\mathbb{R}^N,
\end{equation}
where $C>0$ is a constant (independent of $x$). 
\end{lemma}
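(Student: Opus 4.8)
The plan is to separate the two assertions. The membership $\theta\in L^1(\mathbb{R}^N)\cap H^2(\mathbb{R}^N)$ is routine: the function $\theta(x)=\langle x\rangle^{-\rho}$ is smooth, and differentiating $\langle x\rangle^{-\rho}$ yields the pointwise bounds $|\theta(x)|\le\langle x\rangle^{-\rho}$, $|\nabla\theta(x)|\le C\langle x\rangle^{-\rho-1}$, $|D^2\theta(x)|\le C\langle x\rangle^{-\rho-2}$; since $\rho>N$ (hence $\rho>N/2$), each of these functions lies in $L^1(\mathbb{R}^N)$ and in $L^2(\mathbb{R}^N)$, so $\theta\in L^1(\mathbb{R}^N)\cap H^2(\mathbb{R}^N)$. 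For the pointwise estimate \eqref{3}, the endpoint $s=1$ is immediate, since then $(-\Delta)^s\theta=-\Delta\theta$ and $|\Delta\theta(x)|\le C\langle x\rangle^{-\rho-2}\le C\langle x\rangle^{-\rho}=C\theta(x)$; so from now on I assume $s\in(0,1)$.

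For $s\in(0,1)$ I would work with the symmetric second-difference form of the fractional Laplacian,
$$(-\Delta)^s\theta(x)=\frac{C_{N,s}}{2}\int_{\mathbb{R}^N}\frac{2\theta(x)-\theta(x+z)-\theta(x-z)}{|z|^{N+2s}}\,dz,$$
which, unlike \eqref{fracL}, is an absolutely convergent integral here because $\theta\in C^2$ forces the numerator to be $O(|z|^2)$ as $z\to0$. I then split the domain of integration into the near region $\{|z|\le\tfrac12\langle x\rangle\}$ and the far region $\{|z|>\tfrac12\langle x\rangle\}$; choosing a cutoff proportional to $\langle x\rangle$ rather than a fixed radius is precisely what makes the estimate close.

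On the near region, Taylor's formula gives $|2\theta(x)-\theta(x+z)-\theta(x-z)|\le|z|^2\sup_{|w|\le|z|}|D^2\theta(x+w)|$, and one checks that $\langle x+w\rangle$ is comparable to $\langle x\rangle$ whenever $|w|\le\tfrac12\langle x\rangle$, so that $|D^2\theta(x+w)|\le C\langle x\rangle^{-\rho-2}$ there; the near part is therefore bounded by $C\langle x\rangle^{-\rho-2}\int_{|z|\le\frac12\langle x\rangle}|z|^{2-N-2s}\,dz=C\langle x\rangle^{-\rho-2}\langle x\rangle^{2-2s}=C\langle x\rangle^{-\rho-2s}\le C\theta(x)$, the radial integral converging at the origin exactly because $s<1$. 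On the far region, estimate $|2\theta(x)-\theta(x+z)-\theta(x-z)|\le 2\theta(x)+\theta(x+z)+\theta(x-z)$: the first term contributes $2\theta(x)\int_{|z|>\frac12\langle x\rangle}|z|^{-N-2s}\,dz=C\theta(x)\langle x\rangle^{-2s}\le C\theta(x)$, and, after the substitutions $y=x\pm z$, the other two terms are dominated by $C\langle x\rangle^{-N-2s}\int_{\mathbb{R}^N}\theta(y)\,dy=C\langle x\rangle^{-N-2s}\le C\langle x\rangle^{-\rho}=C\theta(x)$, where $\theta\in L^1(\mathbb{R}^N)$ and, crucially, the hypothesis $\rho\le N+2s$ together with $\langle x\rangle\ge1$ are used. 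Adding the near and far bounds gives \eqref{3}.

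The argument uses no heavy machinery — no moving planes and no Caffarelli--Silvestre extension — it is a direct computation on the single explicit test function $\theta$. The only delicate points are the bookkeeping ones just indicated: the uniform comparability of $\langle x+w\rangle$ and $\langle x\rangle$ on the near region, and keeping track of the roles of the two bounds on $\rho$, namely that $N<\rho$ is what makes $\theta$ (hence the far tail) integrable, while $\rho\le N+2s$ is exactly what allows the far-tail factor $\langle x\rangle^{-N-2s}$ to be absorbed into $\theta(x)=\langle x\rangle^{-\rho}$.
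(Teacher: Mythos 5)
Your argument is correct, and it is worth noting that the paper itself does not prove Lemma \ref{lemma1} at all: it simply quotes the estimate from Fujiwara \cite{Fuj} and Dao--Reissig \cite{DR}. Your proof is essentially the standard direct derivation found in those references: pass to the symmetric second-difference representation of $(-\Delta)^s$ (absolutely convergent since $\theta\in C^2_b$ with decay), split the integral at the $x$-dependent radius $|z|\sim\langle x\rangle$, use the Taylor/Hessian bound $|D^2\theta|\le C\langle x\rangle^{-\rho-2}$ together with the comparability $\langle x+w\rangle\simeq\langle x\rangle$ for $|w|\le\tfrac12\langle x\rangle$ on the near region, and use $\theta\in L^1$ plus $\rho\le N+2s$ and $\langle x\rangle\ge 1$ on the far region; the separate treatment of $s=1$ via $|\Delta\theta|\le C\langle x\rangle^{-\rho-2}\le C\theta$ is also needed, since the singular-integral formula only covers $s\in(0,1)$, and you handle it. All the delicate points you flag (convergence of $\int_{|z|\le R}|z|^{2-N-2s}\,dz$ requiring $s<1$, the role of $N<\rho$ for integrability of the tail, the role of $\rho\le N+2s$ for absorbing $\langle x\rangle^{-N-2s}$ into $\theta(x)$) are exactly the ones that make the estimate close, so your write-up can serve as a self-contained substitute for the external citation.
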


\begin{proof}[Proof of Theorem \ref{theorem1}]
Let  $u  \in L^{2}(\mathbb{R}^N)\cap L^{2p(\cdot)}(\mathbb{R}^N)$ be a weak solution to \eqref{1}.  By Definition \ref{weakequation}, for all $\varphi\in H^2(\mathbb{R}^N)$, $\varphi\geq 0$, one has 
\begin{equation}\label{gest}
\int_{\mathbb{R}^N}|u(x)|^{p(x)}\varphi(x)\,dx\leq \int_{\mathbb{R}^N}|u(x)| \left|(-\Delta)^{\frac{\alpha}{2}}\varphi(x)\right|\,dx+|\lambda| \int_{\mathbb{R}^N}|u(x)|\left|\Delta\varphi(x)\right|\,dx.
\end{equation}
On the other hand, for all  $0<\epsilon<1$ and $x\in \mathbb{R}^N$ a.e, writing
$$
|u(x)| \left|(-\Delta)^{\frac{\alpha}{2}}\varphi(x)\right|=\left[\left(\epsilon p(x)\varphi(x)\right)^{\frac{1}{p(x)}} |u(x)|\right]\left[\left(\epsilon p(x)\varphi(x)\right)^{\frac{-1}{p(x)}} \left|(-\Delta)^{\frac{\alpha}{2}}\varphi(x)\right|\right]
$$
and using Young's inequality, it holds that  
$$
|u(x)| \left|(-\Delta)^{\frac{\alpha}{2}}\varphi(x)\right|\leq \epsilon |u(x)|^{p(x)}\varphi(x)+\epsilon^{\frac{-1}{p(x)-1}}\left(\frac{p(x)-1}{p(x)}\right) p(x)^{\frac{-1}{p(x)-1}}\varphi(x)^{\frac{-1}{p(x)-1}}\left|(-\Delta)^{\frac{\alpha}{2}}\varphi(x)\right|^{\frac{p(x)}{p(x)-1}}.
$$
Next, using \eqref{asm1}, one obtains
$$
|u(x)| \left|(-\Delta)^{\frac{\alpha}{2}}\varphi(x)\right|\leq \epsilon  |u(x)|^{p(x)}\varphi(x)+\epsilon^{\frac{-1}{p^--1}} (p^+)^{\frac{-1}{p^+-1}}\varphi(x)^{\frac{-1}{p(x)-1}}\left|(-\Delta)^{\frac{\alpha}{2}}\varphi(x)\right|^{\frac{p(x)}{p(x)-1}},
$$
which yields
\begin{equation}\label{firstes}
\int_{\mathbb{R}^N}|u(x)| \left|(-\Delta)^{\frac{\alpha}{2}}\varphi(x)\right|\,dx\leq \epsilon \int_{\mathbb{R}^N} |u(x)|^{p(x)}\varphi(x)\,dx+C \int_{\mathbb{R}^N} \varphi(x)^{\frac{-1}{p(x)-1}}\left|(-\Delta)^{\frac{\alpha}{2}}\varphi(x)\right|^{\frac{p(x)}{p(x)-1}}\,dx.
\end{equation}
Throughout, $C$ denotes a positive constant, whose value may change from line to line. Similarly, one obtains
\begin{equation}\label{secondes}
\int_{\mathbb{R}^N}|u(x)|\left|\Delta\varphi(x)\right|\,dx\leq \epsilon \int_{\mathbb{R}^N} |u(x)|^{p(x)}\varphi(x)\,dx+C \int_{\mathbb{R}^N} \varphi(x)^{\frac{-1}{p(x)-1}}\left|\Delta \varphi(x)\right|^{\frac{p(x)}{p(x)-1}}\,dx.
\end{equation}
Next, taking 
$$
0<\varepsilon<\frac{1}{1+|\lambda|},
$$
it follows from \eqref{gest}, \eqref{firstes} and \eqref{secondes} that 
\begin{equation}\label{es3}
\int_{\mathbb{R}^N}|u(x)|^{p(x)}\varphi(x)\,dx\leq C \left(I_1(\varphi)+|\lambda| I_2(\varphi)\right),
\end{equation}
where
$$
I_1(\varphi):=\int_{\mathbb{R}^N} \varphi(x)^{\frac{-1}{p(x)-1}}\left|(-\Delta)^{\frac{\alpha}{2}}\varphi(x)\right|^{\frac{p(x)}{p(x)-1}}\,dx\quad\mbox{and}\quad 
I_2(\varphi):=\int_{\mathbb{R}^N} \varphi(x)^{\frac{-1}{p(x)-1}}\left|\Delta\varphi(x)\right|^{\frac{p(x)}{p(x)-1}}\,dx.
$$
Now, for $R>1$, we take
$$
\varphi(x)=\theta\left(\frac{x}{R}\right),\quad x\in \mathbb{R}^N,
$$
where $\theta$ is the function defined by \eqref{testfOK} with $\rho=N+\alpha$.  Using \eqref{fracL}, the change of variable $x=Ry$, and \cite[Lemma 2.4]{DR}, one obtains
$$
I_1(\varphi)=R^N \int_{\mathbb{R}^N} R^{\frac{-\alpha p(Ry)}{p(Ry)-1}}\theta(y)^{\frac{-1}{p(Ry)-1}} \left|(-\Delta)^{\frac{\alpha}{2}}\theta(y)\right|^{\frac{p(Ry)}{p(Ry)-1}}\,dy\quad\mbox{and}\quad 
I_2(\varphi)=R^{N} \int_{\mathbb{R}^N} R^{\frac{-2 p(Ry)}{p(Ry)-1}}\theta(y)^{\frac{-1}{p(Ry)-1}} \left|\Delta\theta(y)\right|^{\frac{p(Ry)}{p(Ry)-1}}\,dy.
$$
On the other hand, by Lemma \ref{lemma1}, one has 
$$
\left|(-\Delta)^{\frac{\kappa}{2}}\theta(y)\right|\leq C \theta(y),\quad y\in \mathbb{R}^N, 
$$
where $\kappa\in \{\alpha,2\}$. Hence, one deduces that 
\begin{eqnarray}\label{sim1}
\nonumber I_1(\varphi) & \leq & C R^{N} \int_{\mathbb{R}^N}R^{\frac{-\alpha p(Ry)}{p(Ry)-1}}\theta(y)\,dy\\
\nonumber &\leq &  C R^{N} R^{\frac{-\alpha p^+}{p^+-1}} \int_{\mathbb{R}^N}\theta(y)\,dy\\
&=& C R^{N-\frac{\alpha p^+}{p^+-1}}.
\end{eqnarray}
Similarly, one has
\begin{equation}\label{sim2}
I_2(\varphi) \leq C R^{N-\frac{2 p^+}{p^+-1}}.
\end{equation}
Therefore, using \eqref{es3}, one deduces that 
$$
\int_{\mathbb{R}^N}|u(x)|^{p(x)}\varphi_R(x)\,dx\leq C 
\left(R^{N-\frac{\alpha p^+}{p^+-1}}+|\lambda| R^{N-\frac{2 p^+}{p^+-1}}\right),\quad R>1.
$$
Finally, passing to the infimum limit as $R\to \infty$ in the above inequality, using Fatou's Lemma and \eqref{cdnonexist1}, one obtains
$$
\varrho_{p(\cdotp)}(u)=\int_{\mathbb{R}^N}|u(x)|^{p(x)}\,dx=0,
$$
which implies by \eqref{esve} that $\|u\|_{p(\cdotp)}=0$, i.e. $u$ is the trivial solution. This completes the proof of Theorem \ref{theorem1}.
\end{proof}

\begin{proof}[Proof of Theorem \ref{theorem2}]
Let 
$$
(u,v) \in (L^{2}(\mathbb{R}^N)\cap L^{2p(\cdotp)}(\mathbb{R}^N))\times (L^{2}(\mathbb{R}^N)\cap L^{2q(\cdotp)}(\mathbb{R}^N))
$$
be a weak solution to \eqref{2}. By \eqref{weaksystem1} and \eqref{weaksystem2},   for all $\varphi\in H^2(\mathbb{R}^N)$, $\varphi\geq 0$, one has 
\begin{equation}\label{weaksystem1P}  
Y\leq \int_{\mathbb{R}^N}|u(x)| \left|(-\Delta)^{\frac{\alpha}{2}}\varphi(x)\right|\,dx+|\lambda| \int_{\mathbb{R}^N}|u(x)||\Delta\varphi(x)|\,dx
\end{equation}
and
\begin{equation}\label{weaksystem2P}  
X \leq \int_{\mathbb{R}^N}|v(x)| \left|(-\Delta)^{\frac{\beta}{2}}\varphi(x)\right|\,dx+|\mu| \int_{\mathbb{R}^N}|v(x)||\Delta\varphi(x)|\,dx,
\end{equation}
where
$$
X:=\int_{\mathbb{R}^N}|u(x)|^{p(x)}\varphi(x)\,dx\quad\mbox{and}\quad Y:=\int_{\mathbb{R}^N}|v(x)|^{q(x)}\varphi(x)\,dx.
$$
On the other hand, using H\"older's inequality, one obtains
\begin{eqnarray*}
\int_{\mathbb{R}^N}|u(x)| \left|(-\Delta)^{\frac{\alpha}{2}}\varphi(x)\right|\,dx&=&\int_{\{x\in \mathbb{R}^N:\, |u(x)|<1\}}|u(x)|\left|(-\Delta)^{\frac{\alpha}{2}}\varphi(x)\right|\,dx+ \int_{\{x\in \mathbb{R}^N:\, |u(x)|\geq 1\}}|u(x)|\left|(-\Delta)^{\frac{\alpha}{2}}\varphi(x)\right|\,dx\\
&\leq & \left(\int_{\{x\in \mathbb{R}^N:\, |u(x)|<1\}}|u(x)|^{p^+}\varphi(x)\,dx\right)^{\frac{1}{p^+}}\left(\int_{\{x\in \mathbb{R}^N:\, |u(x)|<1\}}\varphi(x)^{-\frac{1}{p^+-1}}\left|(-\Delta)^{\frac{\alpha}{2}}\varphi(x)\right|^{\frac{p^+}{p^+-1}}\,dx\right)^{\frac{p^+-1}{p^+}}\\
&&+ \left(\int_{\{x\in \mathbb{R}^N:\, |u(x)|\geq 1\}}|u(x)|^{p^-}\varphi(x)\,dx\right)^{\frac{1}{p^-}}\left(\int_{\{x\in \mathbb{R}^N:\, |u(x)|\geq 1\}}\varphi(x)^{-\frac{1}{p^--1}}\left|(-\Delta)^{\frac{\alpha}{2}}\varphi(x)\right|^{\frac{p^-}{p^--1}}\,dx\right)^{\frac{p^--1}{p^-}}\\
&\leq & X^{\frac{1}{p^+}} \left(\int_{\mathbb{R}^N}\varphi(x)^{-\frac{1}{p^+-1}}\left|(-\Delta)^{\frac{\alpha}{2}}\varphi(x)\right|^{\frac{p^+}{p^+-1}}\,dx\right)^{\frac{p^+-1}{p^+}}+X^{\frac{1}{p^-}}
\left(\int_{\mathbb{R}^N}\varphi(x)^{-\frac{1}{p^--1}}\left|(-\Delta)^{\frac{\alpha}{2}}\varphi(x)\right|^{\frac{p^-}{p^--1}}\,dx\right)^{\frac{p^--1}{p^-}},
\end{eqnarray*}
i.e.
\begin{equation}\label{aya1}
\int_{\mathbb{R}^N}|u(x)| \left|(-\Delta)^{\frac{\alpha}{2}}\varphi(x)\right|\,dx \leq [F(\alpha,p^+,\varphi)] ^{\frac{p^+-1}{p^+}} X^{\frac{1}{p^+}} + [F(\alpha,p^-,\varphi)] ^{\frac{p^--1}{p^-}} X^{\frac{1}{p^-}},
\end{equation}
where
$$
F(\kappa,r,\psi):=\int_{\mathbb{R}^N}\psi(x)^{-\frac{1}{r-1}}\left|(-\Delta)^{\frac{\kappa}{2}}\psi(x)\right|^{\frac{r}{r-1}}\,dx,
$$
for all $\kappa\in (0,2]$, $r>1$ and $\psi\in H^2(\mathbb{R}^N)$, $\psi\geq 0$.  Similarly, one obtains
\begin{equation}\label{aya2}
\int_{\mathbb{R}^N}|u(x)| |\Delta\varphi(x)|\,dx \leq [F(2,p^+,\varphi)] ^{\frac{p^+-1}{p^+}} X^{\frac{1}{p^+}} + [F(2,p^-,\varphi)] ^{\frac{p^--1}{p^-}} X^{\frac{1}{p^-}},
\end{equation}

\begin{equation}\label{aya3}
\int_{\mathbb{R}^N}|v(x)| \left|(-\Delta)^{\frac{\beta}{2}}\varphi(x)\right|\,dx \leq [F(\beta,q^+,\varphi)] ^{\frac{q^+-1}{q^+}} Y^{\frac{1}{q^+}} + [F(\beta,q^-,\varphi)] ^{\frac{q^--1}{q^-}} Y^{\frac{1}{q^-}},
\end{equation}
and
\begin{equation}\label{aya4}
\int_{\mathbb{R}^N}|v(x)| |\Delta\varphi(x)|\,dx \leq [F(2,q^+,\varphi)] ^{\frac{q^+-1}{q^+}} Y^{\frac{1}{q^+}} + [F(2,q^-,\varphi)] ^{\frac{q^--1}{q^-}} Y^{\frac{1}{p^-}}.
\end{equation}
Next, it follows from \eqref{weaksystem1P}--\eqref{aya4} that 
\begin{eqnarray}\label{NestI}
\left\{\begin{array}{lll}
X &\leq & A(\varphi) Y^{\frac{1}{q^+}}+B(\varphi)Y^{\frac{1}{q^-}},\\
Y &\leq & \overline{A(\varphi)} X^{\frac{1}{p^+}}+\overline{B(\varphi)}X^{\frac{1}{p^-}},
\end{array}
\right.
\end{eqnarray}
where 
$$
A(\varphi):=[F(\beta,q^+,\varphi)] ^{\frac{q^+-1}{q^+}}+|\mu|[F(2,q^+,\varphi)] ^{\frac{q^+-1}{q^+}},\quad B(\varphi):=[F(\beta,q^-,\varphi)] ^{\frac{q^--1}{q^-}}+|\mu|[F(2,q^-,\varphi)] ^{\frac{q^--1}{q^-}}
$$
and
$$
\overline{A(\varphi)}:=[F(\alpha,p^+,\varphi)] ^{\frac{p^+-1}{p^+}}+|\lambda|[F(2,p^+,\varphi)] ^{\frac{p^+-1}{p^+}},\quad \overline{B(\varphi)}:=[F(\alpha,p^-,\varphi)] ^{\frac{p^--1}{p^-}}+|\lambda|[F(2,p^-,\varphi)] ^{\frac{p^--1}{p^-}}.
$$
Thanks to the inequality
$$
(a+b)^m \leq 2^m (a^m+b^m),\quad a,b\geq 0,\, m>0,
$$
from \eqref{NestI}, one deduces that 
$$
Y^r\leq C \left(\overline{A(\varphi)}^rX^{\frac{r}{p^+}}+\overline{B(\varphi)}^rX^{\frac{r}{p^-}}\right),\quad r^{-1}\in \{q^+,q^-\},
$$
which yields
\begin{equation}\label{esXGG}
X \leq C\left( A(\varphi) \overline{A(\varphi)}^{\frac{1}{q^+}}X^{\frac{1}{q^+p^+}}+A(\varphi)\overline{B(\varphi)}^{\frac{1}{q^+}}X^{\frac{1}{q^+p^-}}+
B(\varphi)\overline{A(\varphi)}^{\frac{1}{q^-}} X^{\frac{1}{q^-p^+}}+B(\varphi)\overline{B(\varphi)}^{\frac{1}{q^-}}X^{\frac{1}{q^-p^-}}\right).
\end{equation}
On the other hand, using $\varepsilon$-Young inequality with $0<\varepsilon\ll 1$, one obtains
\begin{eqnarray}
A(\varphi) \overline{A(\varphi)}^{\frac{1}{q^+}}X^{\frac{1}{q^+p^+}}\leq \varepsilon X +C \left[A(\varphi) \overline{A(\varphi)}^{\frac{1}{q^+}}\right]^{\frac{q^+p^+}{q^+p^+-1}},\\
A(\varphi)\overline{B(\varphi)}^{\frac{1}{q^+}}X^{\frac{1}{q^+p^-}}\leq  \varepsilon X +C \left[A(\varphi) \overline{B(\varphi)}^{\frac{1}{q^+}}\right]^{\frac{q^+p^-}{q^+p^--1}},\\
B(\varphi)\overline{A(\varphi)}^{\frac{1}{q^-}} X^{\frac{1}{q^-p^+}}\leq \varepsilon X +C \left[B(\varphi) \overline{A(\varphi)}^{\frac{1}{q^-}}\right]^{\frac{q^-p^+}{q^-p^+-1}},\\
B(\varphi)\overline{B(\varphi)}^{\frac{1}{q^-}} X^{\frac{1}{q^-p^-}}\leq \varepsilon X +C \left[B(\varphi) \overline{B(\varphi)}^{\frac{1}{q^-}}\right]^{\frac{q^-p^-}{q^-p^--1}}.\label{IneqV}
\end{eqnarray}
Therefore, it follows from \eqref{esXGG}--\eqref{IneqV} that 
\begin{equation}\label{XESG}
X\leq C\left(\left[A(\varphi) \overline{A(\varphi)}^{\frac{1}{q^+}}\right]^{\frac{q^+p^+}{q^+p^+-1}}+\left[A(\varphi) \overline{B(\varphi)}^{\frac{1}{q^+}}\right]^{\frac{q^+p^-}{q^+p^--1}}+\left[B(\varphi) \overline{A(\varphi)}^{\frac{1}{q^-}}\right]^{\frac{q^-p^+}{q^-p^+-1}}+\left[B(\varphi) \overline{B(\varphi)}^{\frac{1}{q^-}}\right]^{\frac{q^-p^-}{q^-p^--1}}\right).
\end{equation}
Similarly, one obtains
\begin{equation}\label{YESG}
Y\leq C\left(\left[\overline{A(\varphi)} A(\varphi)^{\frac{1}{p^+}}\right]^{\frac{q^+p^+}{q^+p^+-1}}+\left[\overline{A(\varphi)} B(\varphi)^{\frac{1}{p^+}}\right]^{\frac{p^+q^-}{p^+q^--1}}+\left[\overline{B(\varphi)}A(\varphi)^{\frac{1}{p^-}}\right]^{\frac{p^-q^+}{p^-q^+-1}}+\left[\overline{B(\varphi)}B(\varphi)^{\frac{1}{p^-}}\right]^{\frac{q^-p^-}{q^-p^--1}}\right).
\end{equation}
For $R>1$, we take
$$
\varphi(x)=\theta\left(\frac{x}{R}\right),\quad x\in \mathbb{R}^N,
$$
where $\theta$ is the function defined by \eqref{testfOK} with $\rho=N+\min\{\alpha,\beta\}$.   Next, we have to estimate the terms $A(\varphi)$, $B(\varphi)$, $\overline{A(\varphi)}$ and $\overline{B(\varphi)}$. Similarly to \eqref{sim1} and \eqref{sim2}, one has
$$
F(\beta,q^+,\varphi)\leq C R^{N-\frac{\beta q^+}{q^+-1}}\quad\mbox{and}\quad 
F(2,q^+,\varphi)\leq C R^{N-\frac{2 q^+}{q^+-1}}.
$$
Using the above estimates, one deduces that 
\begin{equation}\label{esAphi}
A(\varphi)\leq C R^{N\left(\frac{q^+-1}{q^+}\right)-\beta}.
\end{equation}
Similarly, one obtains
\begin{equation}\label{esBphi}
B(\varphi)\leq C R^{N\left(\frac{q^--1}{q^-}\right)-\beta},
\end{equation}
\begin{equation}\label{esAphibar}
\overline{A(\varphi)}\leq C R^{N\left(\frac{p^+-1}{p^+}\right)-\alpha}\end{equation}
and
\begin{equation}\label{esBphibar}
\overline{B(\varphi)}\leq C R^{N\left(\frac{p^--1}{p^-}\right)-\alpha}.
\end{equation}
Therefore, using \eqref{XESG} and the estimates \eqref{esAphi}--\eqref{esBphibar}, one deduces that 
\begin{equation}\label{FestCompX}
X\leq C \sum_{i=1}^4 R^{\sigma_i},
\end{equation}
where
$$
\sigma_1:= \left(\frac{q^+p^+}{q^+p^+-1}\right)\left[N\left(\frac{p^+q^+-1}{p^+q^+}\right)-\beta-\frac{\alpha}{q^+}\right]:=\left(\frac{q^+p^+}{q^+p^+-1}\right) \overline{\sigma_1},
$$

$$
\sigma_2 := \left(\frac{q^+p^-}{q^+p^--1}\right) \left[N\left(\frac{p^-q^+-1}{p^-q^+}\right)-\beta-\frac{\alpha}{q^+}\right]:=\left(\frac{q^+p^-}{q^+p^--1}\right) \overline{\sigma_2},
$$
$$
\sigma_3:= \left(\frac{q^-p^+}{q^-p^+-1}\right) \left[N\left(\frac{p^+q^--1}{p^+q^-}\right)-\beta-\frac{\alpha}{q^-}\right]:=\left(\frac{q^-p^+}{q^-p^+-1}\right) \overline{\sigma_3}
$$
and
$$
\sigma_4:= \left(\frac{q^-p^-}{q^-p^--1}\right) \left[N\left(\frac{p^-q^--1}{p^-q^-}\right)-\beta-\frac{\alpha}{q^-}\right]:=\left(\frac{q^-p^-}{q^-p^--1}\right) \overline{\sigma_4}.
$$
One observes easily that 
\begin{equation}\label{sigma1}
\overline{\sigma_1}=\max\{\overline{\sigma_i}:\, i=1,2,3,4\}.
\end{equation}
Similarly, using \eqref{YESG} and the estimates \eqref{esAphi}--\eqref{esBphibar}, one deduces that 
\begin{equation}\label{secondCompY}
Y\leq C \sum_{i=1}^4 R^{\nu_i},
\end{equation}
where
$$
\nu_1:= \left(\frac{q^+p^+}{q^+p^+-1}\right)\left[N\left(\frac{p^+q^+-1}{p^+q^+}\right)-\alpha-\frac{\beta}{p^+}\right]:=\left(\frac{q^+p^+}{q^+p^+-1}\right)\overline{\nu_1}, 
$$
$$
\nu_2:= \left(\frac{p^+q^-}{p^+q^--1}\right) \left[N\left(\frac{q^-p^+-1}{q^-p^+}\right)-\alpha-\frac{\beta}{p^+}\right]:=\left(\frac{p^+q^-}{p^+q^--1}\right) \overline{\nu_2},
$$
$$
\nu_3:= \left(\frac{p^-q^+}{p^-q^+-1}\right) \left[N\left(\frac{q^+p^--1}{q^+p^-}\right)-\alpha-\frac{\beta}{p^-}\right]:=\left(\frac{p^-q^+}{p^-q^+-1}\right)\overline{\nu_3}
$$
and
$$
\nu_4:= \left(\frac{p^-q^-}{p^-q^--1}\right) \left[N\left(\frac{q^-p^--1}{q^-p^-}\right)-\alpha-\frac{\beta}{p^-}\right]:=\left(\frac{p^-q^-}{p^-q^--1}\right) \overline{\nu_4}.
$$
Moreover, one has
\begin{equation}\label{nu1}
\overline{\nu_1}=\max\{\overline{\nu_i}:\, i=1,2,3,4\}.
\end{equation}
Note that condition \eqref{condsyst} is equivalent to 
$$
\overline{\sigma_1}<0\quad\mbox{or}\quad\overline{\nu_1}<0.
$$ 
If $ \overline{\sigma_1}<0$, passing to  the infimum limit as $R\to \infty$ in  
\eqref{FestCompX}, using \eqref{sigma1} and Fatou's Lemma, one deduces that 
$$
\varrho_{p(\cdotp)}(u)=\int_{\mathbb{R}^N}|u(x)|^{p(x)}\,dx=0,
$$
which implies by \eqref{esve} that $\|u\|_{p(\cdotp)}=0$, i.e. $u\equiv 0$. 
Then, by \eqref{weaksystem1P}, one obtains $Y=0$, which yields  $v\equiv 0$. Therefore, $(u,v)\equiv (0,0)$. Similarly, if $ \overline{\nu_1}<0$, passing to  the infimum limit as $R\to \infty$ in  \eqref{secondCompY},
using \eqref{nu1} and Fatou's Lemma, one deduces that 
$$
\varrho_{q(\cdotp)}(v)=\int_{\mathbb{R}^N}|v(x)|^{q(x)}\,dx=0,
$$
which implies that $v\equiv 0$. Then, by \eqref{weaksystem2P}, one obtains $X=0$, which yields  $u\equiv 0$. Therefore, $(u,v)\equiv (0,0)$.
Hence, we established that under condition \eqref{condsyst}, the only weak solution to   \eqref{2} is $(u,v)\equiv (0,0)$. This completes the proof of Theorem \ref{theorem2}.

\end{proof}

\section*{Acknowledgments}
The third author extends his appreciation to the Deanship of Scientific Research at King Saud University, Saudi Arabia, for funding this work through research group no. RGP-237.

\end{document}